\documentclass[10pt]{amsart}
\usepackage[backend=bibtex, style=alphabetic]{biblatex} 
\addbibresource{bibliography.bib}

\usepackage{tikz}
\usetikzlibrary{angles}
\usepackage{amssymb}
\usepackage{amsthm}
\usepackage[all]{xy}
\usepackage{microtype}
\usepackage{xcolor}
\usepackage{hyperref}
\usepackage[nameinlink]{cleveref}
\usepackage{graphicx}
\usepackage{comment}
\usepackage{bbm}
\usepackage{algpseudocode}
\usepackage{algorithm}

\newcommand{\midmid}{\mathrel{}\middle|\mathrel{}}

\makeatletter
    
    \@addtoreset{equation}{section}
  \makeatother

\hypersetup{
 colorlinks,
 linkcolor={teal},
 citecolor={teal},
 urlcolor={teal}
}

\oddsidemargin =10mm
\evensidemargin =10mm
\topmargin =5mm
\textwidth =160mm
\textheight =200mm
\calclayout

\DeclareFieldFormat
  [article,book,inbook,incollection,inproceedings,patent,thesis,unpublished]
  {title}{\emph{#1\isdot}}

\theoremstyle{plain}
	\newtheorem{theorem}{Theorem}
	\newtheorem{lemma}[theorem]{Lemma}
        
	\newtheorem{corollary}[theorem]{Corollary}
	
	\newtheorem{proposition}[theorem]{Proposition}
	\newtheorem{question}[theorem]{Question}
        \newtheorem{conjecture}[theorem]{Conjecture}

\theoremstyle{definition} 
	\newtheorem{remark}[theorem]{Remark}
	\newtheorem{definition}[theorem]{Definition}
	\newtheorem{example}[theorem]{Example}

\begin{document}
\title{Irrationality of the reciprocal sum of doubly exponential sequences}
\author[J. Koizumi]{Junnosuke Koizumi}
\address{RIKEN iTHEMS, Wako, Saitama 351-0198, Japan}
\email{junnosuke.koizumi@riken.jp}

\date{\today}
\thanks{}
\subjclass{11J72, 11D68}

\begin{abstract}
We show that sequences of positive integers whose ratios \(a_n^2/a_{n+1}\) lie within a specific range are almost uniquely determined by their reciprocal sums. For instance, the Sylvester sequence is uniquely characterized as the only sequence with \( a_n^2/a_{n+1}\in [2/3,4/3] \) whose reciprocal sum is equal to \(1\).
This result has applications to irrationality problems. We prove that for almost every real number \(\alpha > 1\), sequences asymptotic to \(\alpha^{2^n}\) have irrational reciprocal sums. Furthermore, our observations provide heuristic insight into an open problem by Erd\H{o}s and Graham.
\end{abstract}

\maketitle
\setcounter{tocdepth}{1}
\tableofcontents

\enlargethispage*{20pt}
\thispagestyle{empty}

\section*{Introduction}

The asymptotic behavior of a sequence of positive integers is related to the irrationality of its reciprocal sum. It is a folklore result that if a sequence satisfies \( \lim_{n\to\infty} a_n^{2^{-n}} = \infty \), then its reciprocal sum is irrational. Therefore, the largest possible asymptotic growth of a sequence for which the rationality of the reciprocal sum can be expected is \( C^{2^n} \) for some constant \(C>1\). A well-known example of such sequence is the (shifted) \emph{Sylvester sequence} \cite[A129871]{OEIS}, which is defined by
$$
    s_1=2,\quad s_{n+1}=s_n^2-s_n+1.
$$
It is straightforward to see that the sum of their reciprocals is $1$:
\[
1=\sum_{n=1}^\infty\dfrac{1}{s_n}=\dfrac{1}{2}+\dfrac{1}{3}+\dfrac{1}{7}+\dfrac{1}{43}+\dfrac{1}{1807}+\cdots.
\]
Also, it is known that there is a constant
\(
c=1.2640847\cdots
\)
such that $s_n\approx c^{2^n}$ (see \cite[p. 109]{Concrete}).
On the other hand, the \emph{Millin series} \cite{Millin} provides an example of a doubly exponential sequence with an irrational reciprocal sum:
$$
    \dfrac{5-\sqrt{5}}{2}=\sum_{n=1}^\infty \dfrac{1}{F_{2^{n}}}=\dfrac{1}{1}+\dfrac{1}{3}+\dfrac{1}{21}+\dfrac{1}{987}+\dfrac{1}{2178309}+\cdots.
$$

The purpose of this paper is to point out that these doubly exponential sequences are often \emph{almost uniquely} determined by their reciprocal sum.
Our main result is the following:

\begin{theorem}\label{thm:main}
    Let $\beta\geq 0$ be a real number, and $(a_n)_{n=1}^\infty$ be a sequence of positive integers satisfying
    \[
    \left|\dfrac{a_n^2}{a_{n+1}}-\beta\right|\leq\dfrac{1}{3},\quad \sum_{n=1}^\infty\dfrac{1}{a_n}=r<\infty.
    \]
    Then, for every $n$ satisfying $a_n\geq 8(\beta+(1/3))^2$, we have
    \[
    a_n = \left\lfloor\left(r-\sum_{k=1}^{n-1}\dfrac{1}{a_k}\right)^{-1}+\beta\right\rceil,
    \]
    where $\lfloor x\rceil=\lfloor x+(1/2)\rfloor$ is the integer closest to $x$.
    Moreover, if $\lim_{n\to \infty}a_n^2/a_{n+1}=\beta$, then we have
    \[
    \lim_{n\to \infty}\left|\left(r-\sum_{k=1}^{n-1}\dfrac{1}{a_k}\right)^{-1}+\beta-a_n\right|=0.
    \]
\end{theorem}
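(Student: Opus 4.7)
My plan is to work with the error term $\delta_n := R_n^{-1} + \beta - a_n$, where $R_n := \sum_{k\geq n} 1/a_k = r - \sum_{k<n} 1/a_k$ and $M := \beta + 1/3$. Since $\lfloor x\rceil = a_n$ iff $a_n - 1/2 \leq x < a_n + 1/2$, the first conclusion is equivalent to $|\delta_n| < 1/2$, and the second is exactly $\delta_n \to 0$. So the strategy is to derive an explicit formula for $\delta_n$ and bound it.

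From $R_n = 1/a_n + R_{n+1}$ one gets $R_n^{-1} = a_n/(1 + a_n R_{n+1})$, whence
\[
\delta_n = \beta - \frac{a_n^2 R_{n+1}}{1 + a_n R_{n+1}}.
\]
Substituting $R_{n+1} = 1/a_{n+1} + R_{n+2}$ together with $a_n^2/a_{n+1} = \beta + \epsilon_n$ (where $|\epsilon_n| \leq 1/3$), a direct manipulation should rewrite this as
\[
\delta_n + \epsilon_n = \frac{(\beta + \epsilon_n)^2/a_n - a_n(a_n - \beta - \epsilon_n)\, R_{n+2}}{1 + a_n R_{n+1}}.
\]
The crucial feature of this form is that both summands in the numerator are non-negative: $\beta + \epsilon_n = a_n^2/a_{n+1} > 0$, and $a_n - \beta - \epsilon_n = a_n(a_{n+1}-a_n)/a_{n+1} \geq 0$ since $a_n \geq M$ forces $a_{n+1} \geq a_n^2/M \geq a_n$. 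Hence $|\delta_n + \epsilon_n|$ is at most the larger of these two nonnegative quantities.

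Next I would bound each summand using the growth rate. Iterating $a_{n+1} \geq a_n^2/M$ yields $a_{n+k} \geq M(a_n/M)^{2^k}$, and the elementary inequality $2^k \geq k+1$ lets one compare $\sum_{k\geq 0}(M/a_n)^{2^k}$ with a geometric series, giving the clean tail bound $R_n \leq 1/(a_n - M)$ for $a_n > M$. Applied at index $n+2$, this yields $a_n(a_n - \beta - \epsilon_n) R_{n+2} \leq a_n^2 R_{n+2} \leq 2M^3/a_n^2$, while $(\beta + \epsilon_n)^2/a_n \leq M^2/a_n$. When $a_n \geq 8M^2$ one has $a_n \geq 2M$, so the tail estimate is dominated by the main term, and both are at most $M^2/a_n \leq 1/8$. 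Combined with $|\epsilon_n| \leq 1/3$, the triangle inequality gives $|\delta_n| \leq 1/8 + 1/3 = 11/24 < 1/2$, proving the first assertion. For the limit statement, the same estimate shows $|\delta_n + \epsilon_n| \leq M^2/a_n \to 0$ as $a_n \to \infty$, which together with $\epsilon_n \to 0$ gives $\delta_n \to 0$.

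The main technical obstacle is the tight constant management. A naive triangle inequality applied directly to the formula for $\delta_n$ (rather than first passing to the cleaner identity for $\delta_n + \epsilon_n$) is lossy by roughly a factor of four, and the threshold $8(\beta+1/3)^2$ really depends on isolating $-\epsilon_n$ as the dominant contribution and controlling only the residual.
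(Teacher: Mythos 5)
Your proof is correct and takes essentially the same route as the paper: both expand $\bigl(r-\sum_{k<n}1/a_k\bigr)^{-1}$ around $a_n-a_n^2/a_{n+1}$ and use the iterated growth bound $a_{n+k}\geq M\,(a_n/M)^{2^k}$ (the paper's $\beta_+$ is your $M$) to make the residual small enough that adding $|a_n^2/a_{n+1}-\beta|\leq 1/3$ stays strictly below $1/2$, with the same argument giving the limit statement. Your single-fraction identity for $\delta_n+\epsilon_n$ with two nonnegative numerator terms bounded by $M^2/a_n\leq 1/8$, and the tail estimate $R_m\leq 1/(a_m-M)$ via $2^k\geq k+1$, are only bookkeeping variants of the paper's decomposition into $-B+a_nA^2/(1+A)$ with each piece bounded below $1/6$ via geometric series.
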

This yields the following characterizations of the Sylvester sequence and the Millin series:

\begin{corollary}\label{cor:Sylvester}
    Let $(a_n)_{n=1}^\infty$ be a sequence of positive integers satisfying
    \[
    \dfrac{2}{3}\leq \dfrac{a_n^2}{a_{n+1}}\leq \dfrac{4}{3},\quad \sum_{n=1}^\infty\dfrac{1}{a_n} = 1.
    \]
    Then, we have $a_n=s_n$, where $(s_n)_{n=1}^\infty$ is the Sylvester sequence.
\end{corollary}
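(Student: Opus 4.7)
The plan is to apply Theorem~\ref{thm:main} with $\beta=1$ and $r=1$, for which the threshold $8(\beta+1/3)^2=128/9<15$ shows that whenever $a_n\ge 15$, the formula
\[
a_n = \left\lfloor\left(1-\sum_{k=1}^{n-1}\frac{1}{a_k}\right)^{-1}+1\right\rceil
\]
uniquely determines $a_n$ from its predecessors. The Sylvester recursion $s_{k+1}-1=s_k(s_k-1)$ yields the telescoping identity $1/s_k=1/(s_k-1)-1/(s_{k+1}-1)$, hence $\sum_{k\ge n}1/s_k=1/(s_n-1)$. Consequently, if $a_k=s_k$ for all $k<n$, then the formula evaluates to $\lfloor(s_n-1)+1\rceil=s_n$, so an induction will close once the first few terms are pinned down.

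To handle the initial terms, I will first establish a tail bound. Since $a_n\ge 2$ forces $a_{n+1}\ge 3a_n^2/4\ge a_n$, the sequence is non-decreasing with all $a_n\ge 2$, and for $k\ge n$ we obtain $a_{k+1}\ge (3a_n/4)\,a_k$, hence
\[
\sum_{k\ge n}\frac{1}{a_k}\le\frac{1}{a_n}\sum_{j\ge 0}\left(\frac{4}{3a_n}\right)^j=\frac{1}{a_n-4/3}.
\]
Now $a_1=1$ is impossible (the sum would exceed $1$), and $a_1\ge 3$ would force $\sum 1/a_n\le 3/5<1$, so $a_1=2$. The ratio constraint then gives $a_2\in\{3,4,5,6\}$; if $a_2\ge 4$ the bound yields $\sum\le 1/2+3/8<1$, so $a_2=3$. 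Likewise $a_3\in\{7,\ldots,13\}$, and $a_3\ge 8$ gives $\sum\le 1/2+1/3+3/20=59/60<1$, forcing $a_3=7$.

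Once the initial segment matches the Sylvester sequence, the ratio condition gives $a_4\ge 3\cdot 7^2/4>15$, so Theorem~\ref{thm:main} applies and, via the telescoping identity, yields $a_4=s_4=43$. Proceeding by induction, each subsequent $a_n$ satisfies $a_n\ge 15$ and equals $s_n$. The main obstacle is not conceptual but combinatorial bookkeeping: one must extract the geometric-series tail bound (essentially an artifact of the proof of Theorem~\ref{thm:main}) and apply it to eliminate the handful of initial candidates left open when $a_n<15$. The rest of the argument is a clean induction driven by the telescoping identity for $1/s_k$.
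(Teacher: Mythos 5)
Your proposal is correct and follows essentially the same route as the paper: apply \Cref{thm:main} with $\beta=1$ to get the floor formula for every $a_n\ge 15$, use the Sylvester telescoping identity to see the formula reproduces $s_n$, pin down $a_1=2$, $a_2=3$, $a_3=7$, and induct. The only difference is minor bookkeeping: you eliminate the initial candidates with the geometric tail bound $\sum_{k\ge n}1/a_k\le 1/(a_n-4/3)$, whereas the paper applies the floor formula at $n=4$ (using $a_4\ge 37$) to force $\sum_{k=1}^{3}1/a_k\ge 69/71$ and thereby fix the first three terms.
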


\begin{corollary}\label{cor:Millin}
    Let $(a_n)_{n=1}^\infty$ be a sequence of positive integers satisfying
    \[
    \dfrac{a_n^2}{a_{n+1}}\leq \dfrac{2}{3},\quad \sum_{n=1}^\infty\dfrac{1}{a_n} = \dfrac{5-\sqrt{5}}{2}.
    \]
    Then, we have $a_n=F_{2^n}$, where $(F_n)_{n=1}^\infty$ is the Fibonacci sequence.
\end{corollary}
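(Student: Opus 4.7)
The plan is to apply Theorem~\ref{thm:main} with $\beta=1/3$. The hypothesis $0<a_n^2/a_{n+1}\leq 2/3$ rewrites as $|a_n^2/a_{n+1}-1/3|\leq 1/3$, and the resulting threshold $8(\beta+1/3)^2=32/9$ means the formula $a_n=\lfloor(r-\sum_{k=1}^{n-1}1/a_k)^{-1}+1/3\rceil$ is valid whenever $a_n\geq 4$.

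First I would verify that $(F_{2^n})_{n\geq 1}$ itself satisfies these hypotheses. The identity $F_{2m}=F_mL_m$ gives $F_{2^n}^2/F_{2^{n+1}}=F_{2^n}/L_{2^n}$, which is $\leq 2/3$ for all $n\geq 1$ (via $L_{2^n}\geq(3/2)F_{2^n}$, which follows from $L_m^2-5F_m^2=\pm 4$ for $m\geq 3$ and is checked directly for $m=2$). The reciprocal-sum identity $\sum_{n\geq 1}1/F_{2^n}=(5-\sqrt{5})/2$ is the classical Millin formula.

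Next I would handle the initial terms $a_1, a_2$, where the threshold $a_n\geq 4$ fails; this is the main obstacle. From $a_{k+1}\geq(3/2)a_k^2$ one obtains $1/a_{k+1}\leq(1/3)/a_k$ whenever $a_k\geq 2$, and hence the telescoping bound $\sum_{k\geq N}1/a_k\leq 3/(2a_N)$ whenever $a_N\geq 2$. Applying this with $N=1$, any assumption $a_1\geq 2$ forces $\sum 1/a_n\leq 3/4<r$, a contradiction; so $a_1=1$. With $a_1=1$ the remaining sum is $(3-\sqrt{5})/2$, and the same estimate with $N=2$ rules out $a_2\geq 4$ (since $3/8<(3-\sqrt{5})/2$), while $a_2=2$ already overshoots. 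Together with $a_2\geq 2$ (forced by $a_1^2/a_2\leq 2/3$), this leaves $a_2=3$.

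Finally, the remaining terms follow by a straightforward induction on $n\geq 3$. If $a_k=F_{2^k}$ for all $k<n$, the growth condition gives $a_n\geq(3/2)F_{2^{n-1}}^2\geq 4$, so Theorem~\ref{thm:main} determines $a_n$ from $r$ and the partial sum $\sum_{k=1}^{n-1}1/F_{2^k}$. Applying the same theorem to $(F_{2^n})$---whose terms exceed $4$ from $n=3$ on---shows that $F_{2^n}$ satisfies the identical recursion, giving $a_n=F_{2^n}$.
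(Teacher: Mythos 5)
Your proposal is correct and follows essentially the same route as the paper: verify the hypotheses for $(F_{2^n})$, apply Theorem~\ref{thm:main} with $\beta=1/3$ (threshold $a_n\geq 4$) to both sequences, pin down $a_1=1$, $a_2=3$ by elementary growth estimates against the exact value of the sum, and conclude by matching the recursions. The only difference is minor bookkeeping in determining the initial terms (your telescoping bound $\sum_{k\geq N}1/a_k\leq 3/(2a_N)$ versus the paper's lower bound on $\sum_{k=1}^{3}1/a_k$ extracted from $a_4\geq 54$), and both work.
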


Note that \Cref{cor:Sylvester} resembles Badea's characterization of the Sylvester-like sequences \cite{Badea}: if $(a_n)_{n=1}^\infty$ is a sequence of positive integers satisfying $a_{n+1}\geq a_n^2-a_n+1$ for $n\gg 0$ and $\sum_{n=1}^\infty(1/a_n)\in \mathbb{Q}$, then the equality $a_{n+1}= a_n^2-a_n+1$ holds for $n\gg 0$.

\Cref{thm:main} has an immediate application to the study of \emph{irrationality sequences} defined by Erd\H{o}s-Straus \cite{Erdos75} and Erd\H{o}s-Graham \cite{Erdos_Graham_80}.
Following the terminology introduced by Kova\v{c}-Tao \cite{Kovac_Tao}, we say that an increasing sequence\footnote{In the original definition \cite{Erdos_Graham_80}, the sequence was required to be strictly increasing, i.e., $a_1<a_2<a_3<\cdots$. In this paper, we relax this condition by allowing equality, enabling us to consider sequences such as $\lfloor \alpha^{2^n}\rfloor$.} of positive integers
$$
    a_1\leq a_2\leq a_3\leq \cdots
$$
is a \emph{Type 2 irrationality sequence} if for every sequence of positive integers $(b_n)_{n=1}^\infty$ such that $a_n\approx b_n$, we have $\sum_{n=1}^\infty(1/b_n)\not\in\mathbb{Q}$.
For example, any sequence with $a_n^{2^{-n}}\to \infty$ is a Type 2 irrationality sequence by the folklore result mentioned above.
On the other hand, Kova\v{c}-Tao \cite{Kovac_Tao} proved that any sequence satisfying $a_n^2/a_{n+1}\to \infty$ (e.g. $\lfloor 2^{(2-\varepsilon)^n}\rfloor$ for $0<\varepsilon<1$) cannot be a Type 2 irrationality sequence.
An unsolved problem of Erd\H{o}s-Graham \cite[p. 63]{Erdos_Graham_80} asks whether $2^{2^n}$ is a Type 2 irrationality sequence; it is also listed in the website \emph{Erd\H{o}s Problems} \cite[Problem \#263]{EP}.
As a consequence of \Cref{thm:main}, we get the following result:

\begin{theorem}\label{type2_countable}
    Let $\mathcal{I}$ denote the following subset of $(1,\infty)$:
    $$
    \mathcal{I}:=\{\alpha\in (1,\infty)\mid \lfloor\alpha^{2^n}\rfloor\text{ is a Type 2 irrationality sequence}\}.
    $$
    Then, its complement $(1,\infty)\setminus \mathcal{I}$ is countable.
\end{theorem}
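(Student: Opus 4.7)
The plan is to show that every $\alpha \in (1,\infty) \setminus \mathcal{I}$ can be encoded by a tuple drawn from a countable set, so that $(1,\infty) \setminus \mathcal{I}$ embeds into a countable set.

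First, I would fix such an $\alpha$ and unpack what it means to fail Type 2 irrationality: there is a sequence of positive integers $(b_n)_{n=1}^\infty$ with $b_n/\lfloor\alpha^{2^n}\rfloor \to 1$ and $r := \sum_{n=1}^\infty 1/b_n \in \mathbb{Q}$. Since $\alpha > 1$ one has $\lfloor\alpha^{2^n}\rfloor \sim \alpha^{2^n}$, so $b_n = \alpha^{2^n}(1+o(1))$; squaring and dividing then gives $b_n^2/b_{n+1} \to 1$, and of course $b_n \to \infty$. Hence for all $n$ beyond some threshold $N$ (depending on $\alpha$ and on the chosen $(b_n)$), one has $|b_n^2/b_{n+1} - 1| \leq 1/3$ and $b_n \geq 15 > 8(4/3)^2$.

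Next I would apply \Cref{thm:main} with $\beta = 1$ to the shifted sequence $(b_{N+n-1})_{n \geq 1}$, whose reciprocal sum is the rational number $r' := r - \sum_{k=1}^{N-1} 1/b_k$. The theorem yields, for every $n \geq N$,
\[
b_n = \left\lfloor \left(r - \sum_{k=1}^{n-1}\frac{1}{b_k}\right)^{-1} + 1\right\rceil,
\]
so the tail $(b_n)_{n \geq N}$ is determined recursively by $r$ together with the finite initial segment $(b_1,\ldots,b_{N-1})$. Consequently the whole sequence $(b_n)$ is encoded by the datum $(N, b_1, \ldots, b_{N-1}, r)$, which ranges over the countable set $\bigcup_{N \geq 1} \{N\} \times \mathbb{Z}_{>0}^{N-1} \times \mathbb{Q}$.

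Finally, since $\alpha = \lim_{n\to\infty} b_n^{2^{-n}}$ is recovered from $(b_n)$, choosing one such witness sequence for each exceptional $\alpha$ produces an injection from $(1,\infty) \setminus \mathcal{I}$ into this countable set, and countability of the complement follows. I do not foresee a serious obstacle: the only real subtlety is verifying that $b_n^2/b_{n+1}$ converges to exactly $1$ so that \Cref{thm:main} applies with $\beta = 1$, and this falls out immediately from the asymptotic $b_n \sim \alpha^{2^n}$ forced by $\alpha > 1$.
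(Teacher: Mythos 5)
Your proposal is correct and follows essentially the same route as the paper: both use \Cref{thm:main} with $\beta=1$ to show that any witness sequence with rational reciprocal sum is determined by a finite initial segment together with the rational sum, hence the collection of such sequences is countable, and then recover $\alpha$ as $\lim_n b_n^{2^{-n}}$ (the paper phrases this as a surjection from the countable set of witness sequences onto the complement, you as an injection of the complement into the countable set of encoding tuples, which is the same argument).
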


In the latter half of the paper, we study the following unsolved problem of Erd\H{o}s-Graham:

\begin{question}[{Erd\H{o}s-Graham \cite[p. 64]{Erdos_Graham_80}}]\label{ques:erdos}
    Let $(a_n)_{n=1}^\infty$ be a sequence of positive integers satisfying
    $$
        \lim_{n\to \infty}\dfrac{a_n^2}{a_{n+1}}=1\quad\text{and}\quad
        \sum_{n=1}^\infty \dfrac{1}{a_n}\in \mathbb{Q}.
    $$
    Is it true that $a_{n+1}=a_n^2-a_n+1$ holds for $n\gg 0$?
\end{question}

In order to state our result, we introduce the concept of \emph{pseudo-greedy expansion}, which is a variant of the greedy expansion of a positive real number into unit fractions.
For a positive real number $r$, its pseudo-greedy expansion is the sequence of positive integers $(a_n)_{n=1}^\infty$ defined by
$$
    a_n=\left\lfloor\left( r-\sum_{k=1}^{n-1}\dfrac{1}{a_k} \right)^{-1}+1\right\rceil,
$$
where $\lfloor x\rceil=\lfloor x+(1/2)\rfloor$ is the integer closest to $x$.
We can show that $r=\sum_{n=1}^\infty(1/a_n)$, so it indeed gives an expansion of $r$ into unit fractions.
We define the \emph{gap sequence} of this expansion by
$$
    \varepsilon_n=\left( r-\sum_{k=1}^{n-1}\dfrac{1}{a_k} \right)^{-1}+1-a_n.
$$
Then, we prove that \Cref{ques:erdos} is equivalent to the following conjecture:

\begin{conjecture}\label{conj}
    Let $r$ be a positive rational number and $(\varepsilon_n)_{n=1}^\infty$ be the gap sequence of the pseudo-greedy expansion of $r$.
    If $\lim_{n\to \infty}\varepsilon_n= 0$, then $\varepsilon_n=0$ holds for $n\gg 0$.
\end{conjecture}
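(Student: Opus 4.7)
My plan is to translate the hypothesis into information about the reduced-form representation $r_n = P_n/Q_n$ of the remainders $r_n := r - \sum_{k=1}^{n-1}(1/a_k)$, and then seek a contradiction when $\varepsilon_n \neq 0$ for all $n$. Writing $r = p/q$ in lowest terms, the recurrence $r_{n+1} = (a_n P_n - Q_n)/(a_n Q_n)$ yields by induction that $Q_n$ divides $q\,a_1 a_2 \cdots a_{n-1}$. I introduce the integer $M_n := Q_n - (a_n - 1)P_n$; then $\varepsilon_n = M_n/P_n$, and coprimality of $P_n,Q_n$ forces $M_n = 0$ to be equivalent to $(P_n, Q_n) = (1, a_n - 1)$. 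A direct computation shows that in this case $r_{n+1} = 1/(a_n(a_n - 1))$, so $a_{n+1} = a_n^2 - a_n + 1$ and $M_{n+1} = 0$: once a gap vanishes it stays zero, and it suffices to rule out $M_n \neq 0$ for every $n$.

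The next step is the dynamical recursion. From $r_{n+1}^{-1} = a_{n+1} - 1 + \varepsilon_{n+1}$ one obtains
\[
    \varepsilon_{n+1} = \frac{a_n^2\, \varepsilon_n}{1-\varepsilon_n} - \delta_n, \qquad \delta_n := a_{n+1} - (a_n^2 - a_n + 1) \in \mathbb{Z}.
\]
Combined with $\varepsilon_{n+1} \in (-1/2, 1/2]$ and $\varepsilon_n \to 0$, this forces $\delta_n = \lfloor a_n^2 \varepsilon_n \rceil + O(\varepsilon_n^2)$, hence $\delta_n/a_n^2 \to 0$ and $a_n^2/a_{n+1} \to 1$. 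A standard telescoping argument then yields $a_n \sim c^{2^n}$ for some constant $c > 1$.

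To conclude, the goal is to bound $P_n$. From $P_n(a_n - 1 + \varepsilon_n) = Q_n$ and the divisibility above we get $P_n \leq q\,a_1 \cdots a_{n-1}/(a_n - 3/2)$. The telescoping identity $\prod_{k=1}^{n-1} a_k/a_n = a_1^{-1} \prod_{k=1}^{n-1} (a_k^2/a_{k+1})$ reduces the boundedness of $P_n$ to the convergence of the product $\prod_k (a_k^2/a_{k+1})$. If one could establish $P_n \leq B$ for some $B$, then $M_n \neq 0$ would force $|\varepsilon_n| \geq 1/B$, contradicting $\varepsilon_n \to 0$, and the proof would be complete.

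The main obstacle is precisely this boundedness claim. Since $\log(a_k^2/a_{k+1}) \sim 1/a_k - \varepsilon_k$, convergence of the product essentially demands summability of $\sum \varepsilon_k$, which is strictly stronger than $\varepsilon_k \to 0$. It is unclear whether the recursion above, together with the integrality of $\delta_n$, is enough to recover this summability: the $\varepsilon_k$ could a priori decay as slowly as $1/\log k$ or exhibit awkward sign cancellations. Closing this gap appears to require deeper Diophantine input specific to the rational $r$, and I expect it to be the decisive difficulty.
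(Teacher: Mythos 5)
The statement you are trying to prove is labeled a \emph{conjecture} in the paper, and it is genuinely open: the paper proves no more than that it is \emph{equivalent} to an unsolved question of Erd\H{o}s--Graham (\Cref{thm:equiv}), verifies it by computer for $r=p/q$ with $q\leq 10^5$, and offers a probabilistic heuristic (\Cref{rem:heuristic}). So there is no proof to compare against, and your attempt could not have succeeded in full. That said, your reduction is sound as far as it goes and lands on essentially the same structure the paper builds. Your reduced numerator $P_n$ is a divisor of the paper's unreduced numerator $c_n$ of $x_n=c_n/d_n$ with $d_n=qa_1\cdots a_{n-1}$ (\Cref{lem:rational_PG}); your integer $M_n$ corresponds to the paper's $e_n=c_n\varepsilon_n$; your observation that a vanishing gap stays zero is the paper's lemma to that effect; and your target --- boundedness of the numerator, which together with integrality of $M_n$ and $\varepsilon_n\to 0$ would force $\varepsilon_n=0$ eventually --- is exactly the mechanism behind the paper's partial results. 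One concrete advantage of working with the unreduced $c_n$ rather than your reduced $P_n$ is the clean additive recursion $c_{n+1}=c_n-e_n$, which immediately yields the two proved special cases (\Cref{prop:Badea}): if $e_n\geq 0$ eventually, then $c_n$ is a non-increasing sequence of positive integers, hence eventually constant, hence $e_n=0$. Your $P_n$ does not obviously satisfy such a recursion because cancellation with $Q_n$ can occur.

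The gap you identify at the end is the real one, and you have located it precisely. Unconditionally the paper only obtains $c_n=O(1.5^n)$ (from $|e_n|\leq c_n/2$), which is useless against $\varepsilon_n\to 0$ alone; to beat it one needs quantitative decay such as $\varepsilon_n=o(a_n^{-2})$, which is available only \emph{after} assuming the Erd\H{o}s--Graham conclusion, or sign/summability control on the $\varepsilon_k$. Your remark that convergence of $\prod_k(a_k^2/a_{k+1})$ essentially demands summability of $\sum_k\varepsilon_k$ is exactly the content of the Erd\H{o}s--Straus hypothesis $\liminf_n \varepsilon_n\prod_{k<n}(1-\varepsilon_k)\geq 0$ in \Cref{cor:Badea}~(1); without such an assumption the product $\prod_{k<n}(1-\varepsilon_k)$ can oscillate or decay, and $c_n$ (equivalently your $P_n$) can a priori grow, so that $M_n\neq 0$ is compatible with $\varepsilon_n=M_n/P_n\to 0$. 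In short: your write-up is an honest and accurate reduction of the conjecture to its known core difficulty, but it is not a proof, and the paper does not contain one either.
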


We expect that \Cref{conj} is true even without assuming $\lim_{n\to \infty}\varepsilon_n=0$.
We confirmed by a computer that \Cref{conj} holds for $r=p/q$ with $0<p\leq q\leq 10^5$.
In \Cref{rem:heuristic}, we provide a heuristic argument showing that \Cref{conj} is likely to be correct.

\subsection*{Notation}

For two sequences of real numbers $(x_n)_{n=1}^\infty$, $(y_n)_{n=1}^\infty$ with $y_n>0$, we write:
\begin{itemize}
    \item $x_n=O(y_n)$ if $\limsup_{n\to \infty}(|x_n|/y_n)<\infty$;
    \item $x_n=o(y_n)$ if $\lim_{n\to \infty}(x_n/y_n)=0$;
    \item $x_n\approx y_n$ if $\lim_{n\to \infty}(x_n/y_n)=1$.
\end{itemize}
We say that a proposition $P(n)$ is true for $n\gg 0$ if there exists a positive integer $n_0$ such that $P(n)$ is true for all $n\geq n_0$.

\subsection*{Acknowledgments}
The author would like to thank Takahiro Ueoro for carefully reading the paper and for informing him about the problem of odd greedy expansion.
The author also thanks Vjekoslav Kova\v{c} for providing comments on an earlier version of this paper.

\section{Proof of the main results}

The idea of the proof of \Cref{thm:main} is very simple.
Let $(a_n)_{n=1}^\infty$ be a sequence of positive integers such that $a_n^2/a_{n+1}$ is sufficiently close to $\beta\geq 0$ and $a_n$ is sufficiently large.
Then, we have
\begin{align}\label{eq:remainder}
r-\sum_{k=1}^{n-1}\dfrac{1}{a_k} = \dfrac{1}{a_n}+\dfrac{1}{a_{n+1}}+\cdots =\dfrac{1}{a_n}\left(1+\sum_{k=1}^\infty\dfrac{a_n}{a_{n+k}}\right).
\end{align}
Using that the sum in the right hand side is very small, we get
\[
\left(r-\sum_{k=1}^{n-1}\dfrac{1}{a_k}\right)^{-1}\fallingdotseq
a_n\left(1-\sum_{k=1}^\infty\dfrac{a_n}{a_{n+k}}\right)
=a_n-\dfrac{a_n^2}{a_{n+1}}-\sum_{k=2}^\infty\dfrac{a_n^2}{a_{n+k}}.
\]
Since the last sum is also very small and $a_n^2/a_{n+1}$ is close to $\beta$, it follows that the left hand side is close to $a_n-\beta$, which is what we want.
We will make this precise:

\begin{proof}[Proof of \Cref{thm:main}]
    Let $\beta\geq 0$ be a real number, and $(a_n)_{n=1}^\infty$ be a sequence of positive integers such that
    \[
    \left|\dfrac{a_n^2}{a_{n+1}}-\beta\right|\leq\dfrac{1}{3},\quad \sum_{n=1}^\infty\dfrac{1}{a_n}=r<\infty.
    \]
    We write
    \(\beta_+ = \beta + (1/3)\).
    Fix a positive integer $n$ satisfying $a_n\geq 8\beta_+^2$.
    Combining with $a_n\geq 1$, we obtain
    \( a_n^2\geq 8\beta_+^2 \)
    and hence
    \(a_n\geq 2\sqrt{2}\beta_+\).
    In other words, we have
    \[
    \dfrac{\beta_+^2}{a_n}\leq \dfrac{1}{8},\quad \dfrac{\beta_+}{a_n}\leq\dfrac{1}{2\sqrt{2}}.
    \]
    Our assumption shows that $a_{k+1}\geq a_k^2/\beta_+$ holds for all $k>0$.
    Using this inequality iteratively, we obtain
    \[
    a_{n+k}\geq \dfrac{a_{n+k-1}^2}{\beta_+}\geq\dfrac{a_{n+k-2}^4}{\beta_+^3}\geq\cdots\geq\dfrac{a_n^{2^k}}{\beta_+^{2^k-1}}=\beta_+\left(\dfrac{a_n}{\beta_+}\right)^{2^k}
    \]
    for all $k\geq 0$.
    Therefore, we have
    \begin{align}\label{eq:estimate_A}
        A:=\sum_{k=1}^\infty\dfrac{a_n}{a_{n+k}}&{}\leq\dfrac{a_n}{\beta_+}\sum_{k=1}^\infty \left(\dfrac{\beta_+}{a_n}\right)^{2^k}\leq\dfrac{a_n}{\beta_+}\sum_{k=1}^\infty\left(\dfrac{\beta_+}{a_n}\right)^{2k}\\
        &{}=\dfrac{\beta_+}{a_n}\cdot\dfrac{1}{1-(\beta_+/a_n)^2}\leq\dfrac{\beta_+}{a_n}\cdot\dfrac{1}{1-(1/8)}=\dfrac{8}{7}\cdot\dfrac{\beta_+}{a_n}.\notag
    \end{align}
    Similarly, we have
    \begin{align}\label{eq:estimate_B}
        B:=\sum_{k=2}^\infty\dfrac{a_n^2}{a_{n+k}}&{}\leq\dfrac{a_n^2}{\beta_+}\sum_{k=2}^\infty\left(\dfrac{\beta_+}{a_n}\right)^{2^k}
        \leq\dfrac{a_n^2}{\beta_+}\sum_{k=1}^\infty\left(\dfrac{\beta_+}{a_n}\right)^{4k}\\
        &{}= \dfrac{\beta_+^2}{a_n}\cdot\dfrac{\beta_+}{a_n}\cdot\dfrac{1}{1-(\beta_+/a_n)^4}\leq \dfrac{2\sqrt{2}}{63}.\notag
    \end{align}
    Now we use \eqref{eq:remainder}.
    Taking the reciprocal, we get
    \begin{align}\label{eq:reciprocal_expansion}
    \left(r-\sum_{k=1}^{n-1}\dfrac{1}{a_k}\right)^{-1}
    &{}=\dfrac{a_n}{1+A}=a_n-a_nA+\dfrac{a_nA^2}{1+A}\\
    &{}=a_n-\dfrac{a_n^2}{a_{n+1}}-B+\dfrac{a_nA^2}{1+A}.\notag
    \end{align}
    By \eqref{eq:estimate_B}, we have $B<(1/6)$.
    On the other hand, by \eqref{eq:estimate_A}, we have
    \[
        \dfrac{a_nA^2}{1+A}\leq a_nA^2\leq \dfrac{64}{49}\cdot\dfrac{\beta_+^2}{a_n}\leq \dfrac{8}{49}<\dfrac{1}{6}.
    \]
    Therefore, we have
    \[
    \left|\left(r-\sum_{k=1}^{n-1}\dfrac{1}{a_k}\right)^{-1}+\dfrac{a_n^2}{a_{n+1}}-a_n\right|=\left|-B+\dfrac{a_nA^2}{1+A}\right|<\dfrac{1}{6}.
    \]
    Finally, by our assumption that $|(a_n^2/a_{n+1})-\beta|\leq (1/3)$, we obtain
    \[
    \left|\left(r-\sum_{k=1}^{n-1}\dfrac{1}{a_k}\right)^{-1}+\beta-a_n\right|<\dfrac{1}{6}+\dfrac{1}{3}=\dfrac{1}{2}.
    \]
    This shows that $a_n$ is the integer closest to $(r-\sum_{k=1}^{n-1}(1/a_k))^{-1}+\beta$.

    Suppose, moreover, that $\lim_{n\to \infty}(a_n^2/a_{n+1})=\beta$.
    The inequalities \eqref{eq:estimate_A} and \eqref{eq:estimate_B} shows that $a_nA^2$ and $B$ converge to $0$ as $n\to \infty$.
    Therefore, by \eqref{eq:reciprocal_expansion}, we have
    \[
    \left(r-\sum_{k=1}^{n-1}\dfrac{1}{a_k}\right)^{-1} = a_n - \dfrac{a_n^2}{a_{n+1}} + o(1) = a_n - \beta + o(1).
    \]
    In other words, the difference $|(r-\sum_{k=1}^{n-1}(1/a_k))^{-1}+\beta-a_n|$ converges to $0$ as $n\to \infty$.
\end{proof}

\begin{proof}[Proof of \Cref{cor:Sylvester}]
    First, we note that the Sylvester sequence satisfies
    \[
    \sum_{k=1}^{n-1}\dfrac{1}{s_k} = 1-\dfrac{1}{s_n-1},
    \]
    by the recurrence relation $s_{n+1}=s_n^2-s_n+1$.
    Therefore, we also have
    \begin{align}\label{eq:sylvester_recurrence}
    s_n=\left(1-\sum_{k=1}^{n-1}\dfrac{1}{s_k}\right)^{-1}+1.
    \end{align}
    Let $(a_n)_{n=1}^\infty$ be another sequence of positive integers satisfying
    \[
    \dfrac{2}{3}\leq \dfrac{a_n^2}{a_{n+1}}\leq \dfrac{4}{3},\quad \sum_{n=1}^\infty\dfrac{1}{a_n} = 1.
    \]
    Applying \Cref{thm:main} with $\beta=1$, we obtain
    \begin{align}\label{eq:sylvester_recurrence_2}
    a_n\geq 15\implies a_n=\left\lfloor\left(1-\sum_{k=1}^{n-1}\dfrac{1}{a_k}\right)^{-1}+1\right\rceil.
    \end{align}
    Since $\sum_{n=1}^\infty(1/a_n)=1$, we have $a_1\geq 2$.
    Using the inequality $a_{n+1}\geq (3/4)a_n^2$ inductively, we obtain
    \[
    a_1\geq 2,\quad a_2\geq 3,\quad a_3\geq 7,\quad a_4\geq 37.
    \]
    In particular, \eqref{eq:sylvester_recurrence_2} implies
    \[
    37\leq a_4=\left\lfloor\left(1-\sum_{k=1}^{3}\dfrac{1}{a_k}\right)^{-1}+1\right\rceil,
    \]
    and hence
    \[
    \sum_{k=1}^3\dfrac{1}{a_k}\geq \dfrac{69}{71} >\dfrac{1}{2}+\dfrac{1}{3} + \dfrac{1}{8}.
    \]
    Therefore, we must have $a_1=2$, $a_2=3$, $a_3=7$, and the rest of the sequence is determined by \eqref{eq:sylvester_recurrence_2}.
    Comparing this with \eqref{eq:sylvester_recurrence}, we conclude that $(a_n)_{n=1}^\infty$ coincides with the Sylvester sequence.
\end{proof}

\begin{proof}[Proof of \Cref{cor:Millin}]
    Let $\phi=(1+\sqrt{5})/2$ and $\overline{\phi}=(1-\sqrt{5})/2$.
    Then, the Fibonacci sequence can be written as
    \(
    F_n=(\phi^n-\overline{\phi}^n)/\sqrt{5}
    \),
    and hence we have
    \[
    \dfrac{F_{2^n}^2}{F_{2^{n+1}}} = \dfrac{1}{\sqrt{5}}\cdot\dfrac{\phi^{2^{n+1}}-2+\overline{\phi}^{2^{n+1}}}{\phi^{2^{n+1}}-\overline{\phi}^{2^{n+1}}}\leq \dfrac{1}{\sqrt{5}}\leq \dfrac{2}{3}.
    \]
    Applying \Cref{thm:main} with $\beta=1/3$, we obtain
    \begin{align}\label{eq:Millin_recurrence}
    n\geq 3\implies F_{2^n}=\left\lfloor\left(\dfrac{5-\sqrt{5}}{2}-\sum_{k=1}^{n-1}\dfrac{1}{F_{2^k}}\right)^{-1}+\dfrac{1}{3}\right\rceil.
    \end{align}
    Let $(a_n)_{n=1}^\infty$ be another sequence of positive integers satisfying
    \[
    \dfrac{a_n^2}{a_{n+1}}\leq \dfrac{2}{3},\quad \sum_{n=1}^\infty\dfrac{1}{a_n} = \dfrac{5-\sqrt{5}}{2}.
    \]
    Applying \Cref{thm:main} with $\beta=(1/3)$, we obtain
    \begin{align}\label{eq:Millin_recurrence_2}
    a_n\geq 4\implies a_n=\left\lfloor\left(\dfrac{5-\sqrt{5}}{2}-\sum_{k=1}^{n-1}\dfrac{1}{a_k}\right)^{-1}+\dfrac{1}{3}\right\rceil.
    \end{align}
    Using the inequality $a_{n+1}\geq (3/2)a_n^2$ inductively, we obtain
    \[
    a_1\geq 1,\quad a_2\geq 2,\quad a_3\geq 6,\quad a_4\geq 54.
    \]
    In particular, \eqref{eq:Millin_recurrence_2} implies
    \[
    54\leq a_4 = \left\lfloor\left(\dfrac{5-\sqrt{5}}{2}-\sum_{k=1}^{3}\dfrac{1}{a_k}\right)^{-1}+\dfrac{1}{3}\right\rceil,
    \]
    and hence
    \begin{align}\label{eq:Millin_first_three}
    \sum_{k=1}^{3}\dfrac{1}{a_k}\geq \dfrac{1583 - 319 \sqrt{5}}{638} = 1.3631572\cdots.
    \end{align}
    This easily implies that $a_1=1$ and $a_2=3$.
    The rest of the sequence is determined by \eqref{eq:Millin_recurrence_2}.
    Comparing this with \eqref{eq:Millin_recurrence}, we conclude that $(a_n)_{n=1}^\infty$ coincides with $(F_{2^n})_{n=1}^\infty$.
\end{proof}

\begin{proof}[Proof of \Cref{type2_countable}]
    By definition, $(1,\infty)\setminus\mathcal{I}$ consists of real numbers $\alpha>1$ such that there exists a sequence of positive integers $(a_n)_{n=1}^\infty$ satisfying
    \[
    a_n\approx \alpha^{2^n}\quad\text{and}\quad \sum_{n=1}^\infty\dfrac{1}{a_n} \in \mathbb{Q}.
    \]
    Let $\mathcal{S}$ denote the set of all such sequences, i.e.,
    $$
    \mathcal{S}:=\bigcup_{\alpha>1}\left\{(a_n)_{n=1}^\infty \midmid a_n\approx \alpha^{2^n},\ \sum_{n=1}^\infty\dfrac{1}{a_n}\in \mathbb{Q}\right\}.
    $$
    Then, there is a surjective map
    $$
        \mathcal{S}\to (1,\infty)\setminus\mathcal{I};\quad (a_n)_{n=1}^\infty\mapsto \lim_{n\to \infty} a_n^{2^{-n}}.
    $$
    It suffices to show that $\mathcal{S}$ is countable.
    Let $(a_n)_{n=1}^\infty$ be an element of $\mathcal{S}$.
    Then, we have $\lim_{n\to \infty}(a_n^2/a_{n+1})=1$ and $\lim_{n\to \infty}a_n=\infty$.
    By \Cref{thm:main}, there exists an integer $n_0>0$ such that
    \[
    n\geq n_0\implies a_n = \left\lfloor\left(r-\sum_{k=1}^{n-1}\dfrac{1}{a_k}\right)^{-1}+1\right\rceil,
    \]
    where $r=\sum_{n=1}^\infty(1/a_n)$.
    In particular, the sequence $(a_n)_{n=1}^\infty$ is uniquely determined by the tuple
    $(a_1,\dots,a_{n_0-1},r)$.
    Since the set of all such tuples is countable, it follows that $\mathcal{S}$ is countable.
\end{proof}

\section{Pseudo-greedy expansion}

Given a positive real number $r$, there are many possible ways to express $r$ as a (possibly infinite) sum of unit fractions.
The simplest choice is the \emph{greedy expansion}, which is given by
$$
    r=\dfrac{1}{a_1}+\dfrac{1}{a_2}+\dfrac{1}{a_3}+\cdots,\quad a_n=\left\lceil\left(r-\sum_{k=1}^{n-1}\dfrac{1}{a_k}\right)^{-1}\right\rceil.
$$
It can be easily shown that the greedy expansion of any positive rational number terminates in finite steps.
Another interesting choice is the \emph{odd greedy expansion}, where $a_n$ is defined to be the smallest odd number $\geq (r-\sum_{k=1}^{n-1}(1/a_k))^{-1}$.
It is an open problem whether the odd greedy expansion of a positive rational number with odd denominator terminates in finite steps (see \cite[p. 88]{Guy_unsolved}).

Motivated by \Cref{thm:main}, we study the following variant of the greedy expansion:

\begin{definition}
    Let $r$ be a positive real number.
    Define a sequence of positive integers $(a_n)_{n=1}^\infty$ by
    $$
        a_n=\left\lfloor \left(r-\sum_{k=1}^{n-1}\dfrac{1}{a_k}\right)^{-1}+1\right\rceil.
    $$
    We call $(a_n)_{n=1}^\infty$ the \emph{pseudo-greedy expansion} of $r$.
\end{definition}

\begin{lemma}
    Let $r$ be a positive real number and $(a_n)_{n=1}^\infty$ be its pseudo-greedy expansion.
    Then, we have
    \(r = \sum_{n=1}^{\infty}(1/a_n)\).
\end{lemma}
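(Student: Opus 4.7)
The plan is to set $r_n := r - \sum_{k=1}^{n-1}(1/a_k)$ (the $n$-th remainder) and prove that $r_n \to 0$, since this is equivalent to the desired identity $r = \sum_{n=1}^\infty (1/a_n)$. Before doing so I need to verify that the sequence $(a_n)$ is even well-defined, i.e.\ that each $r_n$ is positive so that $r_n^{-1}$ makes sense and that $a_n$ is a positive integer.

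First I would unpack the definition of the rounding function. The identity $a_n = \lfloor r_n^{-1}+1\rceil$ is equivalent to $a_n - 1/2 \leq r_n^{-1}+1 < a_n + 1/2$, which rearranges to the two-sided bound
\[
r_n^{-1}+\tfrac{1}{2} \;<\; a_n \;\leq\; r_n^{-1}+\tfrac{3}{2}.
\]
The left inequality gives $a_n > r_n^{-1}$, i.e.\ $1/a_n < r_n$, so $r_{n+1} = r_n - 1/a_n > 0$. Starting from $r_1 = r > 0$, this shows inductively that every $r_n$ is positive (so each $a_n$ is defined and at least $1$, since $r_n^{-1}+1 > 1$) and that the sequence $(r_n)$ is strictly decreasing.

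Next, being decreasing and bounded below by $0$, the sequence $(r_n)$ converges to some $r^* \geq 0$. I would rule out $r^* > 0$ by a boundedness argument: if $r^* > 0$, then $r_n^{-1} \leq (r^*)^{-1}$ for every $n$, so the right-hand bound yields $a_n \leq (r^*)^{-1}+3/2$, and hence $1/a_n \geq \varepsilon := ((r^*)^{-1}+3/2)^{-1} > 0$. But then $r_n - r_{n+1} = 1/a_n \geq \varepsilon$ for all $n$, which is incompatible with the convergence of $(r_n)$. Therefore $r^* = 0$, and telescoping gives $\sum_{n=1}^\infty (1/a_n) = r_1 - \lim_n r_n = r$, as required.

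I do not expect any real obstacle here: the entire argument hinges on extracting the interval bound $r_n^{-1}+1/2 < a_n \leq r_n^{-1}+3/2$ from the nearest-integer definition, after which positivity, monotonicity, and ruling out a positive limit are each one-line observations. The mildly delicate point is simply checking the strict inequality $a_n > r_n^{-1}$ to ensure the remainders stay positive, which comes from the fact that $\lfloor\cdot\rceil$ rounds $r_n^{-1}+1$ to an integer at distance strictly greater than $1/2$ above $r_n^{-1}$.
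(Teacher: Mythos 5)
Your proof is correct and takes essentially the same route as the paper: both bound $a_n$ above by $x_n^{-1}+O(1)$ (the paper uses $a_n\leq x_n^{-1}+2$, you use $a_n\leq x_n^{-1}+\tfrac{3}{2}$) and use monotonicity of the remainder sequence to force its limit $\gamma\geq 0$ to be zero --- the paper by passing to the limit in $x_{n+1}\leq x_n\bigl(1-\tfrac{1}{1+2x_n}\bigr)$, you by the equivalent contradiction that a positive limit would keep $1/a_n$ bounded below. Your explicit verification that the remainders stay positive (via $a_n> x_n^{-1}+\tfrac{1}{2}$), so the expansion is well defined, is a detail the paper leaves implicit and is a welcome addition.
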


\begin{proof}
    We define a sequence of positive real numbers $(x_n)_{n=1}^\infty$ by
    $x_n=r-\sum_{k=1}^{n-1}(1/a_k)$.
    Since $(x_n)_{n=1}^\infty$ is decreasing, we have $\lim_{n\to \infty}x_n=\gamma$ for some real number $\gamma\geq 0$.
    By definition, we have $a_n=\lfloor x_n^{-1} +1\rceil\leq x_n^{-1}+2$ and hence
    $$
        x_{n+1}=x_n-\dfrac{1}{a_n}\leq x_n - \dfrac{1}{x_n^{-1}+2} = x_n\left(1-\dfrac{1}{1+2x_n}\right).
    $$
    Taking the limit as $n\to \infty$, we obtain
    $$
        \gamma\leq \gamma\left(1-\dfrac{1}{1+2\gamma}\right)
    $$
    and hence $\gamma=0$.
    Therefore, we have $r = \sum_{n=1}^{\infty}(1/a_n)$.
\end{proof}

\begin{definition}
    Let $r$ be a positive real number and $(a_n)_{n=1}^\infty$ be its pseudo-greedy expansion.
    We define a sequence of positive real numbers $(x_n)_{n=1}^\infty$ by
    $$
        x_n=r-\sum_{k=1}^{n-1}\dfrac{1}{a_k}.
    $$
    We call $(x_n)_{n=1}^\infty$ the \emph{remainder sequence} of the pseudo-greedy expansion of $r$.
    By definition, we have $a_n=\lfloor x_n^{-1}+1\rceil$.
    We also define a sequence of real numbers $(\varepsilon_n)_{n=1}^\infty$ by
    $$
        \varepsilon_n = x_n^{-1}+1-a_n.
    $$
    We call $(\varepsilon_n)_{n=1}^\infty$ the \emph{gap sequence} of the pseudo-greedy expansion of $r$.
\end{definition}

As an immediate consequence of \Cref{thm:main}, we obtain the following:
\begin{corollary}\label{cor:main}
    Let $(a_n)_{n=1}^\infty$ be a sequence of positive integers satisfying
    $$
        \lim_{n\to \infty}\dfrac{a_n^2}{a_{n+1}}=1\quad\text{and}\quad
        \sum_{n=1}^\infty \dfrac{1}{a_n}<\infty.
    $$
    Then, there is some integer $N\geq 0$ such that $(a_{N+n})_{n=1}^\infty$ is the pseudo-greedy expansion of
    \(\sum_{n=1}^\infty(1/a_{N+n})\).
    Moreover, the gap sequence of this expansion satisfies $\lim_{n\to \infty}\varepsilon_n=0$.
\end{corollary}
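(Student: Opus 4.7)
The plan is to reduce this directly to Theorem \ref{thm:main} applied with $\beta = 1$. Since $\sum 1/a_n$ converges we have $a_n \to \infty$, and since $\lim_n a_n^2/a_{n+1} = 1$ there exists an integer $N \geq 0$ such that for every $n > N$ one has simultaneously
\[
\left|\dfrac{a_n^2}{a_{n+1}} - 1\right| \leq \dfrac{1}{3} \quad\text{and}\quad a_n \geq 8\left(1+\dfrac{1}{3}\right)^2.
\]
Set $b_m := a_{N+m}$ and $r' := \sum_{m=1}^\infty 1/b_m$. Then $(b_m)_{m=1}^\infty$ is a sequence of positive integers satisfying the hypotheses of Theorem \ref{thm:main} with $\beta = 1$, with the crucial feature that the ratio bound and lower bound now hold at \emph{every} index $m \geq 1$.

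The next step is to apply Theorem \ref{thm:main} to $(b_m)$: for every $m \geq 1$,
\[
b_m = \left\lfloor\left(r' - \sum_{j=1}^{m-1}\dfrac{1}{b_j}\right)^{-1} + 1\right\rceil.
\]
This is precisely the recursion defining the pseudo-greedy expansion of $r'$, so a short induction on $m$ (the case $m=1$ being $b_1 = \lfloor (r')^{-1} + 1\rceil$) identifies $(b_m)$ with that expansion. For the claim on the gap sequence, I invoke the second part of Theorem \ref{thm:main}: since $\lim b_m^2/b_{m+1} = 1$, the quantity
\[
\left(r' - \sum_{j=1}^{m-1}\dfrac{1}{b_j}\right)^{-1} + 1 - b_m
\]
tends to $0$ as $m \to \infty$. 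But this is exactly $\varepsilon_m$ for the pseudo-greedy expansion of $r'$.

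The only real subtlety — and it is a minor one — lies in the choice of $N$: one must shift far enough that both the ratio bound $|a_n^2/a_{n+1} - 1| \leq 1/3$ and the size bound $a_n \geq 8(4/3)^2$ hold at every index of the \emph{shifted} sequence, so that Theorem \ref{thm:main} identifies each $b_m$ with the corresponding term of the pseudo-greedy recursion (not merely the terms with large index). Once $N$ is fixed in this way, the corollary is essentially a direct transcription of Theorem \ref{thm:main}.
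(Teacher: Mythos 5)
Your proposal is correct and is exactly the argument the paper intends: the corollary is stated as an ``immediate consequence'' of Theorem~\ref{thm:main}, and the intended reasoning is precisely your shift by $N$ so that both $|a_n^2/a_{n+1}-1|\leq 1/3$ and $a_n\geq 8(4/3)^2$ hold at every index of the shifted sequence, followed by applying both parts of the theorem with $\beta=1$ to identify the shifted sequence with the pseudo-greedy expansion and its gap sequence with a null sequence. No gaps; your handling of the induction identifying the recursion with the pseudo-greedy expansion and of $a_n\to\infty$ (from convergence of $\sum 1/a_n$) is exactly what is needed.
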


\begin{example}\label{example:sylvester}
    The pseudo-greedy expansion of $1$ is given by the Sylvester sequence:
    $$
        1=\dfrac{1}{2}+\dfrac{1}{3}+\dfrac{1}{7}+\dfrac{1}{43}+\dfrac{1}{1807}+\cdots.
    $$
    More generally, for any integer $m>0$, the pseudo-greedy expansion of $1/m$ is given by $a_n=s_n(m)$, where
    $$
        s_1(m)=m+1,\quad s_{n+1}(m)=s_n(m)^2-s_n(m)+1.
    $$
    The remainder/gap sequences are given by $x_n = 1/(s_n(m)-1)$, $\varepsilon_n=0$.
    It is known that there is a constant $c(m)>1$ such that
    $s_n(m)\approx c(m)^{2^n}$ (see \cite[p. 109]{Concrete}).
    Wagner-Ziegler \cite{Wagner_Ziegler} showed that $c(m)$ is irrational, and Dubickas \cite{Dubickas} showed that $c(m)$ is transcendental.
\end{example}

The recurrence relation $a_{n+1}=a_n^2-a_n+1$ appearing in \Cref{example:sylvester} is a special case of the following:

\begin{lemma}\label{lem:PG_sylvester}
    Let $r$ be a positive real number, and $(a_n)_{n=1}^\infty$, $(\varepsilon_n)_{n=1}^\infty$ be its pseudo-greedy expansion and the gap sequence.
    Then, we have
    $$
        a_{n+1}=\dfrac{1}{1-\varepsilon_n}a_n^2-a_n+(1-\varepsilon_{n+1}).
    $$
\end{lemma}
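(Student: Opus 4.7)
The plan is to unpack the definitions of the remainder and gap sequences to express $x_n$ in terms of $a_n$ and $\varepsilon_n$, compute $x_{n+1}$ using the recursion $x_{n+1}=x_n-1/a_n$, and then read off $a_{n+1}$ from $x_{n+1}$ and $\varepsilon_{n+1}$.

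First I would rewrite the defining relation $\varepsilon_n = x_n^{-1}+1-a_n$ as $x_n^{-1}=a_n-1+\varepsilon_n$, so that
\[
x_n = \frac{1}{a_n-1+\varepsilon_n}.
\]
Next I would compute
\[
x_{n+1} = x_n - \frac{1}{a_n} = \frac{a_n - (a_n-1+\varepsilon_n)}{a_n(a_n-1+\varepsilon_n)} = \frac{1-\varepsilon_n}{a_n(a_n-1+\varepsilon_n)},
\]
and then invert to get
\[
x_{n+1}^{-1} = \frac{a_n(a_n-1+\varepsilon_n)}{1-\varepsilon_n} = \frac{a_n^2}{1-\varepsilon_n} - a_n,
\]
where in the last step I simplify the numerator $a_n^2-a_n+a_n\varepsilon_n = a_n^2 - a_n(1-\varepsilon_n)$.

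Finally, applying the definition $a_{n+1} = x_{n+1}^{-1}+1-\varepsilon_{n+1}$ directly yields
\[
a_{n+1} = \frac{1}{1-\varepsilon_n}a_n^2 - a_n + (1-\varepsilon_{n+1}),
\]
which is the claimed identity. The only subtlety worth flagging is that one should check $1-\varepsilon_n\neq 0$, which is automatic since $|\varepsilon_n|\leq 1/2$ by the definition of $\lfloor \cdot\rceil$; otherwise the argument is a short algebraic manipulation with no real obstacle.
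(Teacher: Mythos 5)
Your argument is correct and is essentially the paper's own proof: both rewrite the gap relation as $x_n^{-1}=a_n-(1-\varepsilon_n)$, compute $x_{n+1}=x_n-1/a_n=(1-\varepsilon_n)/(a_n^2-(1-\varepsilon_n)a_n)$, and substitute into $a_{n+1}=x_{n+1}^{-1}+1-\varepsilon_{n+1}$. The extra remark that $1-\varepsilon_n\neq 0$ (since $\varepsilon_n\in[-1/2,1/2)$) is a fine, harmless addition.
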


\begin{proof}
    By definition, we have $x_n^{-1}=a_n-(1-\varepsilon_n)$ and hence
    \[
    x_{n+1}=x_n-\dfrac{1}{a_n}=\dfrac{1}{a_n-(1-\varepsilon_n)}-\dfrac{1}{a_n}=\dfrac{1-\varepsilon_n}{a_n^2-(1-\varepsilon_n)a_n}.
    \]
    Substituting this into $a_{n+1}=x_{n+1}^{-1}+1-\varepsilon_{n+1}$, we obtain the desired formula.
\end{proof}

\begin{lemma}
    Let $r$ be a positive real number, and $(\varepsilon_n)_{n=1}^\infty$ be the gap sequence of the pseudo-greedy expansion of $r$.
    Then, we have
    $$
        \varepsilon_n=0\implies \varepsilon_{n+1}=0.
    $$
\end{lemma}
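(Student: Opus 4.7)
The plan is to derive the implication directly from the recurrence established in \Cref{lem:PG_sylvester}. Setting $\varepsilon_n=0$ in the formula
\[
a_{n+1}=\dfrac{1}{1-\varepsilon_n}a_n^2-a_n+(1-\varepsilon_{n+1})
\]
collapses it to $a_{n+1}=a_n^2-a_n+1-\varepsilon_{n+1}$, which I would rearrange as
\[
\varepsilon_{n+1}=a_n^2-a_n+1-a_{n+1}.
\]
The right-hand side is an integer because $a_n$ and $a_{n+1}$ are integers, so the key observation is that $\varepsilon_{n+1}$ must itself be an integer.

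Next I would combine this with the a priori bound on $\varepsilon_{n+1}$ that comes from the very definition of the pseudo-greedy expansion. Since $a_{n+1}=\lfloor x_{n+1}^{-1}+1\rceil$ is the \emph{nearest} integer to $x_{n+1}^{-1}+1$, the quantity
\[
\varepsilon_{n+1}=x_{n+1}^{-1}+1-a_{n+1}
\]
lies in $[-1/2,1/2)$. An integer in $[-1/2,1/2)$ can only be $0$, so $\varepsilon_{n+1}=0$, which is exactly the claim.

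I do not expect any obstacle: once \Cref{lem:PG_sylvester} is in hand, the proof is a one-line combination of integrality with the half-unit rounding bound, and no further analytic estimates are needed.
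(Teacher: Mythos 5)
Your proof is correct and is essentially the paper's argument in different packaging: the paper writes $x_n=1/m$, computes $x_{n+1}=1/(m(m+1))$ directly and concludes the gap vanishes, while you extract the same computation from \Cref{lem:PG_sylvester} (which is proved before this lemma, so there is no circularity) and finish by observing that $\varepsilon_{n+1}$ is an integer lying in $[-1/2,1/2)$, hence zero. Both proofs rest on the same identity for $x_{n+1}$ in terms of $x_n$ under $\varepsilon_n=0$, so there is no gap.
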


\begin{proof}
    Let $(a_n)_{n=1}^\infty$, $(x_n)_{n=1}^\infty$ be the pseudo-greedy expansion of $r$ and its remainder sequence.
    If $\varepsilon_n=0$, then $x_n^{-1}$ is an integer, so we can write
    \(x_n=1/m\)
    for some positive integer $m$.
    Then, we have $a_n = m+1$ and hence
    \[
    x_{n+1}=x_n-\dfrac{1}{a_n} = \dfrac{1}{m(m+1)}.
    \]
    This shows that $x_{n+1}^{-1}$ is an integer and thus $\varepsilon_{n+1}=0$.
\end{proof}

\begin{example}\label{ex:1129}
    The pseudo-greedy expansion of $11/29$ is given as follows:
    \begin{center}
        \begin{tabular}{c||c|c|c|c|c|c|c}
            $n$ & $1$ & $2$ & $3$ & $4$ & $5$ & $6$ & $\cdots$ \rule[0pt]{0pt}{0pt} \\ \hline
            $x_n$ & $\dfrac{11}{29}$ & $\dfrac{15}{116}$ & $\dfrac{19}{1044}$ & $\dfrac{5}
            {14616}$ & $\dfrac{1}{10684296}$ & $\dfrac{1}{114154191699912}$ & $\cdots$ \rule[-10pt]{0pt}{25pt}\\ \hline
            $a_n$ & $4$ & $9$ & $56$ & $2924$ & $10684297$ & $114154191699913$ & $\cdots$  \rule[0pt]{0pt}{0pt} \\ \hline
            $\varepsilon_n$ & $-\dfrac{4}{11}$ & $-\dfrac{4}{15}$ & $-\dfrac{1}{19}$ & $\dfrac{1}{5}$ & $0$ & $0$ & $\cdots$ \rule[-10pt]{0pt}{25pt}
        \end{tabular}
    \end{center}
    We see that $(a_5,a_6,\dots)$ is the pseudo-greedy expansion of $1/10684296$, which is given as in \Cref{example:sylvester}.
    In other words, we have
    $\varepsilon_5=\varepsilon_6=\cdots=0$.
\end{example}

We conjecture that for any positive rational number $r$, the gap sequence of the pseudo-greedy expansion satisfies $\varepsilon_n=0$ for $n\gg 0$, at least when $\varepsilon_n\to 0$ (\Cref{conj}).
We confirmed by a computer that this conjecture is true for $r=p/q$ with $0<p\leq q\leq 10^5$.
Note that this conjecture resembles the termination problem of the odd greedy expansion.

Let us take a closer look at the pseudo-greedy expansion of positive rational numbers.

\begin{lemma}\label{lem:rational_PG}
    Let $r=p/q$ be a positive rational number, and $(a_n)_{n=1}^\infty$, $(x_n)_{n=1}^\infty$, $(\varepsilon_n)_{n=1}^\infty$ be its pseudo-greedy expansion, the remainder sequence, and the gap sequence.
    \begin{enumerate}
        \item Let $d_n=qa_1a_2\cdots a_{n-1}$.
        Then we can write
        \[
        x_n=\dfrac{c_n}{d_n},\quad \varepsilon_n = \dfrac{e_n}{c_n},
        \]
        where $c_n$ is a positive integer and $e_n$ is an integer.
        \item The pair $(c_n,d_n)$ determines $(e_n,a_n)$ by
        \[
        \begin{cases}
            e_n\equiv d_n\pmod{c_n},\quad -\dfrac{c_n}{2}\leq e_n<\dfrac{c_n}{2},\\
            a_n = \dfrac{d_n-e_n}{c_n}+1.
        \end{cases}
        \]
        The tuple $(c_n,d_n,e_n,a_n)$ determines $(c_{n+1},d_{n+1})$ by
        \[
        \begin{cases}
            c_{n+1}=c_n-e_n,\\
            d_{n+1}=d_na_n.
        \end{cases}
        \]
        \item We have the following asymptotic estimates:
        $$
            c_n=O\left(\dfrac{a_1a_2\cdots a_{n-1}}{a_n}\right),\quad
            c_n=O(1.5^n).
        $$
    \end{enumerate}
\end{lemma}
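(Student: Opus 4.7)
The plan is to prove (1) and (2) simultaneously by induction on $n$, and then extract the asymptotics of (3) directly from the resulting recursion. The whole lemma is essentially bookkeeping for the pseudo-greedy recursion in integer arithmetic; the only nontrivial observation is how the asymmetric rounding convention $\lfloor \cdot \rceil = \lfloor \cdot + 1/2 \rfloor$ constrains $e_n$.

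For (1) and (2), I would begin with $n = 1$, where $x_1 = p/q$ forces $(c_1, d_1) = (p, q)$. For the inductive step, substituting $x_n = c_n/d_n$ into $x_{n+1} = x_n - 1/a_n$ gives
\[
x_{n+1} = \frac{c_n a_n - d_n}{d_n a_n},
\]
so $d_{n+1} = d_n a_n$ (matching the stated formula) and $c_{n+1} := c_n a_n - d_n$ is an integer. Positivity of $c_{n+1}$ follows from the built-in inequality $a_n \ge x_n^{-1} + 1/2 > x_n^{-1}$ coming from the rounding. Likewise, rewriting $\varepsilon_n = x_n^{-1} + 1 - a_n$ with common denominator $c_n$ gives $e_n = d_n + c_n - a_n c_n \in \mathbb{Z}$. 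The formulas in (2) — namely $a_n = (d_n - e_n)/c_n + 1$, $e_n \equiv d_n \pmod{c_n}$, and $c_{n+1} = c_n - e_n$ — are then direct algebraic rearrangements of this definition of $e_n$ together with $c_{n+1} = c_n a_n - d_n$. The bounds $-c_n/2 \le e_n < c_n/2$ come from unwinding $\lfloor y \rceil = \lfloor y + 1/2 \rfloor$ applied to $y = (d_n + c_n)/c_n$: this forces $a_n - 1/2 \le y < a_n + 1/2$, which after multiplying through by $c_n$ is exactly the desired asymmetric bound on $e_n = c_n y - a_n c_n$.

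For (3), both asymptotics would come from the identity $c_n = q (a_1 \cdots a_{n-1}) x_n$. The first estimate reduces to showing $q x_n a_n = O(1)$; this follows from the uniform bound $a_n \le x_n^{-1} + 3/2$ together with monotonicity $x_n \le x_1 = p/q$, yielding $q x_n a_n \le q + 3p/2$. The second estimate exploits the recursion from (2): the bound $e_n \ge -c_n/2$ gives $c_{n+1} = c_n - e_n \le (3/2) c_n$, so iterating yields $c_n \le (3/2)^{n-1} p = O(1.5^n)$. I do not foresee any significant obstacle — the whole argument is mechanical once the integer identity $e_n = d_n + c_n - a_n c_n$ is in hand, and the main place where care is needed is the asymmetric rounding bound on $e_n$, which drives both the recursion in (2) and the geometric bound in (3).
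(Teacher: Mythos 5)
Your proposal is correct and follows essentially the same route as the paper: the identity $e_n = d_n + c_n - a_nc_n$, the asymmetric rounding bound giving $-c_n/2 \le e_n < c_n/2$, the recursion $c_{n+1} = c_na_n - d_n = c_n - e_n$, and the bound $c_{n+1}\le (3/2)c_n$ are exactly the paper's steps. The only differences are cosmetic — you organize (1)--(2) as an induction and make the first estimate in (3) quantitative via $a_n \le x_n^{-1}+3/2$ and $x_n \le p/q$, where the paper simply invokes $a_n \approx x_n^{-1} = d_n/c_n$.
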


\begin{proof}
    \begin{enumerate}
        \item By definition, we have
    \[
    x_n = \dfrac{p}{q}-\sum_{k=1}^{n-1}\dfrac{1}{a_k}\in \dfrac{1}{qa_1\cdots a_{n-1}}\mathbb{Z}.
    \]
    Therefore, we can write $x_n=c_n/d_n$ for some positive integer $c_n$.
    The formula $\varepsilon_n=x_n^{-1}+1-a_n$ implies that $\varepsilon_n\in (1/c_n)\mathbb{Z}$, so we can write $\varepsilon_n = e_n/c_n$ for some integer $e_n$.
    \item The definition of $\varepsilon_n$ can be reformulated as $\varepsilon_n=x_n^{-1}-\lfloor x_n^{-1}\rceil$.
    Therefore, $\varepsilon_n$ is characterized by
    \[
    \varepsilon_n\equiv x_n^{-1} \pmod 1,\quad -\dfrac{1}{2}\leq \varepsilon_n < \dfrac{1}{2}.
    \]
    Since $\varepsilon_n=e_n/c_n$, this yields the characterization of $e_n$.
    The formula for $a_n$ follows from $a_n = x_n^{-1}  - \varepsilon_n + 1$.
    The equality $d_{n+1}=d_na_n$ is immediate from the definition.
    Finally, the formula for $c_{n+1}$ follows from
    $$
    x_{n+1}=x_n-\dfrac{1}{a_n}=\dfrac{c_na_n-d_n}{d_na_n} = \dfrac{c_n-e_n}{d_{n+1}}.
    $$
    \item The first estimate follows $d_n=qa_1a_2\cdots a_{n-1}$ and
    \[a_n = \lfloor x_n^{-1}+1\rceil\approx x_n^{-1}=\dfrac{d_n}{c_n}.\]
    The second one follows from $c_{n+1}= c_n-e_n\leq 1.5c_n$.\qedhere
    \end{enumerate}
\end{proof}

Now we prove that \Cref{ques:erdos} by Erd\H{o}s-Graham is equivalent to our \Cref{conj}:

\begin{theorem}\label{thm:equiv}
    \Cref{conj} is true if and only if \Cref{ques:erdos} has an affirmative answer.
\end{theorem}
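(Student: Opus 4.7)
The plan is to prove the equivalence in two directions, in both cases combining \Cref{cor:main} with \Cref{lem:PG_sylvester}. The key bridge is furnished by \Cref{lem:PG_sylvester}: for a pseudo-greedy expansion, the Sylvester recurrence $a_{n+1}=a_n^2-a_n+1$ is equivalent (after rearranging) to an identity tying $\varepsilon_{n+1}$ to $\varepsilon_n$, and when both gaps vanish, the identity collapses to the Sylvester recurrence itself.

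For the direction \Cref{conj} $\Rightarrow$ \Cref{ques:erdos}, I would start with a sequence $(a_n)_{n=1}^\infty$ of positive integers satisfying $\lim_{n\to\infty}a_n^2/a_{n+1}=1$ and $\sum_{n=1}^\infty(1/a_n)=r\in\mathbb{Q}$. By \Cref{cor:main}, there is some integer $N\geq 0$ such that $(a_{N+n})_{n=1}^\infty$ is the pseudo-greedy expansion of $r':=r-\sum_{k=1}^{N}(1/a_k)$, which is a positive rational number, and the gap sequence of this expansion converges to $0$. Applying \Cref{conj} yields $\varepsilon_n=0$ for $n\gg 0$, and \Cref{lem:PG_sylvester} then translates this to $a_{n+1}=a_n^2-a_n+1$ for $n\gg 0$, which is the conclusion of \Cref{ques:erdos}.

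For the converse direction, I would start with a positive rational $r$ whose pseudo-greedy expansion $(a_n)_{n=1}^\infty$ has gap sequence $(\varepsilon_n)_{n=1}^\infty$ satisfying $\varepsilon_n\to 0$. From $\sum(1/a_n)=r<\infty$ I get $a_n\to\infty$, and dividing the identity of \Cref{lem:PG_sylvester} by $a_{n+1}$ shows $\lim_{n\to\infty}a_n^2/a_{n+1}=1$. Hence the hypotheses of \Cref{ques:erdos} hold, so an affirmative answer produces $a_{n+1}=a_n^2-a_n+1$ for $n\gg 0$. Substituting this equality back into the formula of \Cref{lem:PG_sylvester} and solving yields the exact recursion $\varepsilon_{n+1}=a_n^2\varepsilon_n/(1-\varepsilon_n)$ for all $n\gg 0$.

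The main obstacle is now to upgrade this relation to the conclusion $\varepsilon_n=0$. Since $|\varepsilon_{n+1}|<1/2$ by definition and $\varepsilon_n\to 0$, the above identity immediately gives $|\varepsilon_n|=O(1/a_n^2)$. On the other hand, \Cref{lem:rational_PG} writes $\varepsilon_n=e_n/c_n$ with $e_n\in\mathbb{Z}$ and supplies the slow-growth bound $c_n=O(1.5^n)$, so any nonzero $\varepsilon_n$ automatically satisfies $|\varepsilon_n|\geq 1/c_n$. Since the Sylvester recurrence forces $a_n$ to grow doubly exponentially once it takes effect, eventually $a_n^2$ dwarfs $c_n$, and the two bounds become incompatible unless $\varepsilon_n=0$. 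This closes the loop and proves \Cref{conj}.
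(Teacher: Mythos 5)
Your proposal is correct and follows essentially the same route as the paper: one direction via \Cref{cor:main}, \Cref{conj}, and \Cref{lem:PG_sylvester}; the converse by deriving $\varepsilon_{n+1}=\varepsilon_n a_n^2/(1-\varepsilon_n)$ from the Sylvester recurrence and playing the bound $|\varepsilon_n|=O(a_n^{-2})$ against the representation $\varepsilon_n=e_n/c_n$ with $c_n=O(1.5^n)$ from \Cref{lem:rational_PG}. The only cosmetic difference is that the paper concludes directly from $e_n=c_n\varepsilon_n=o(1)$ that the integer $e_n$ vanishes eventually, while you phrase the same comparison as a contradiction with the doubly exponential growth of $a_n$.
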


\begin{proof}
    First, suppose that \Cref{conj} is true.
    Let $(a_n)_{n=1}^\infty$ be a sequence of positive integers satisfying
    $$
        \lim_{n\to \infty}\dfrac{a_n^2}{a_{n+1}}=1\quad\text{and}\quad
        \sum_{n=1}^\infty \dfrac{1}{a_n}\in \mathbb{Q}.
    $$
    By \Cref{cor:main}, we may assume that $(a_n)_{n=1}^\infty$ is the pseudo-greedy expansion of some positive rational number $r$, and that its gap sequence satisfies $\lim_{n\to \infty}\varepsilon_n=0$.
    By our assumption that \Cref{conj} is true, we have $\varepsilon_n=0$ for $n\gg 0$.
    The formula
    \begin{align}\label{eq:pseudo-sylvester}
        a_{n+1}=\dfrac{1}{1-\varepsilon_n}a_n^2-a_n+(1-\varepsilon_{n+1})
    \end{align}
    given in \Cref{lem:PG_sylvester} shows that $a_{n+1}=a_n^2-a_n+1$ for $n\gg 0$.

    Conversely, suppose that \Cref{ques:erdos} has an affirmative answer.
    Let $r$ be a positive rational number and $(a_n)_{n=1}^\infty$ be its pseudo-greedy expansion.
    We define the quantities $c_n$ and $e_n$ as in \Cref{lem:rational_PG}.
    Assume that the gap sequence satisfies $\lim_{n\to \infty}\varepsilon_n=0$.
    Then, the formula \eqref{eq:pseudo-sylvester} shows that
    $$
        \lim_{n\to \infty}\dfrac{a_n^2}{a_{n+1}}=1.
    $$
    Therefore, by our assumption that \Cref{ques:erdos} has an affirmative answer, we have $a_{n+1}=a_n^2-a_n+1$ for $n\gg 0$.
    Comparing this with the formula \eqref{eq:pseudo-sylvester}, we obtain
    \[
    \dfrac{\varepsilon_na_n^2}{1-\varepsilon_n}= \varepsilon_{n+1} = o(1).
    \]
    In particular, we have $\varepsilon_n=o(a_n^{-2})$.
    Combining this with the estimate $c_n=O(1.5^n)$ given in \Cref{lem:rational_PG} (3), we obtain
    \(e_n=c_n\varepsilon_n=o(1)\).
    Since $e_n$ is an integer, we see that $e_n=0$ holds for $n\gg 0$.
    This shows that \Cref{conj} is true.
\end{proof}

\begin{remark}\label{rem:heuristic}
Using \Cref{lem:rational_PG}, we can provide a heuristic argument showing that \Cref{conj} is likely to be correct even without assuming $\lim_{n\to \infty}\varepsilon_n=0$.
Let $r$ be a positive rational number, and define $c_n$, $d_n$, and $e_n$ as in \Cref{lem:rational_PG}.
By \Cref{lem:rational_PG} (2), we have $c_{n+1}=c_n-e_n$ and hence
\[
\dfrac{c_{n+1}}{c_n} = 1-\dfrac{e_n}{c_n},\quad -\dfrac{1}{2}\leq \dfrac{e_n}{c_n}< \dfrac{1}{2}.
\]
Thus the behavior of $(c_n)_{n=1}^\infty$ can be modeled by the multiplicative random walk
\(c_{n+1} = t_nc_n\),
where $t_n$ is chosen uniformly randomly from $[1/2,3/2)$.
Since we have
\[
\mathbb{E}[\log t_n] = \int_{1/2}^{3/2} \log t\ dt = \dfrac{3}{2}\log 3 - \log 2 - 1 = -0.0452287\cdots < 0,
\]
$c_n$ tends to shrink exponentially on average, so it is natural to expect that $e_n=0$ holds for some $n$.
\end{remark}

\begin{remark}
    When actually computing the values of \( \varepsilon_n \) for a given rational number \( r \), directly performing the calculation will cause the values of \( a_n \) and \( d_n \) to grow explosively large.
    We can use modular arithmetic to avoid this problem.
    When the values of \( c_1, \dots, c_n \) and \( e_1, \dots, e_{n-1} \) are known, one can use the recurrence relation from \Cref{lem:rational_PG} (2) to inductively compute, for \( k = 1, 2, \dots, n-1 \), the residue classes
    \[
        a_k \pmod{c_{k+1}c_{k+2}\cdots c_n}\quad\text{and}\quad d_{k+1} \pmod{c_{k+1}c_{k+2}\cdots c_n}.
    \]
    In particular, we can compute \( d_n \pmod{c_n} \), and this value can then be used to determine \( e_n \) and \( c_{n+1} \).
    The following is a pseudocode for computing the gap sequence $(\varepsilon_n)_{n=1}^\infty$ of the pseudo-greedy expansion of a positive rational number $r=p/q$.
    \begin{algorithm}[h]
    \caption{Pseudocode for computing $(\varepsilon_n)_{n=1}^\infty$ for $r=p/q$}
    \begin{algorithmic}[1]
    \State {$c_1\gets p$}
    \ForAll {$n \gets 1$ to $n_{\mathrm{max}}$}
        \State $d_1 \gets q$
        \ForAll {$k\gets 1$ to $n-1$}
            \State $a_k \gets ((d_k - e_k)/c_k) + 1 \pmod{c_{k+1}\cdots c_n}$
            \State $d_{k+1}\gets d_ka_k  \pmod{c_{k+1}\cdots c_n}$
        \EndFor
        \State $e_n \gets d_n - c_n\lfloor d_n/c_n\rceil$
        \State $c_{n+1} \gets c_n - e_n$
        \State $\varepsilon_n \gets e_n/c_n$
        \State Print $\varepsilon_n$
    \EndFor
    \end{algorithmic}
    \end{algorithm}
\end{remark}

Finally, we reinterpret the partial results on \Cref{ques:erdos} due to Erdős-Straus \cite{Erdos_Straus_63} and Badea \cite{Badea} within our framework.
In terms of the pseudo-greedy expansion, their results can be regarded as the following special cases of \Cref{conj}:

\begin{proposition}\label{prop:Badea}
    Let $r$ be a positive rational number, and $(\varepsilon_n)_{n=1}^\infty$ be the gap sequence of the pseudo-greedy expansion of $r$.
    Suppose that one of the following conditions is satisfied:
    \begin{enumerate}
        \item $\liminf_{n\to \infty}\varepsilon_n\prod_{k=1}^{n-1}(1-\varepsilon_k)\geq 0.$
        \item $\varepsilon_n\geq 0$ holds for $n\gg 0$.
    \end{enumerate}
    Then, we have $\varepsilon_n=0$ for $n\gg 0$.
\end{proposition}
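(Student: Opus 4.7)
The plan is to exploit the recursion $c_{n+1} = c_n - e_n$ from Lemma \ref{lem:rational_PG}(2) to show that under either hypothesis the integers $e_n$ are eventually non-negative, and then to conclude by a monotonicity argument on the positive integer sequence $(c_n)_{n=1}^\infty$.

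First, since $c_{n+1}/c_n = 1 - \varepsilon_n$, iteration yields the telescoping identity
\[
c_n = c_1 \prod_{k=1}^{n-1}(1-\varepsilon_k).
\]
Each factor is strictly positive because $\varepsilon_k \in [-1/2,\,1/2)$ by Lemma \ref{lem:rational_PG}(2), so the product is positive. Multiplying by $\varepsilon_n$ gives
\[
\varepsilon_n \prod_{k=1}^{n-1}(1-\varepsilon_k) = \frac{c_n \varepsilon_n}{c_1} = \frac{e_n}{c_1}.
\]
Under hypothesis (1), the left-hand side has non-negative liminf, hence so does $e_n$; since $e_n$ is an integer, this forces $e_n \geq 0$ for $n \gg 0$. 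Under hypothesis (2), $e_n = c_n \varepsilon_n \geq 0$ directly for $n \gg 0$, since $c_n$ is positive. Thus both hypotheses reduce to the same combinatorial statement about the integer sequence $(e_n)$.

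To finish, $e_n \geq 0$ for $n \gg 0$ together with $c_{n+1} = c_n - e_n$ makes $(c_n)$ eventually non-increasing. Being a sequence of positive integers, it must eventually be constant, so $e_n = 0$ and hence $\varepsilon_n = 0$ for $n \gg 0$. I do not expect any real obstacle here: the only care needed is to verify that every factor $1-\varepsilon_k$ is positive so that the telescoping identity preserves signs, which is automatic from the bound $|\varepsilon_k| < 1/2$ built into the definition of the gap sequence.
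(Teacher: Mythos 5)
Your proof is correct and follows essentially the same route as the paper: the identity $e_n = c_1\,\varepsilon_n\prod_{k=1}^{n-1}(1-\varepsilon_k)$ reduces case (1) to eventual non-negativity of the integers $e_n$, and then $c_{n+1}=c_n-e_n$ with $(c_n)$ a sequence of positive integers forces eventual constancy and $e_n=0$. The only tiny slip is the final claim $|\varepsilon_k|<1/2$: the gap sequence satisfies $-1/2\leq\varepsilon_k<1/2$, so $\varepsilon_k=-1/2$ is possible, but $1-\varepsilon_k\geq 1/2>0$ still holds and nothing in the argument is affected.
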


\begin{proof}
    We define the quantities $c_n$ and $e_n$ as in \Cref{lem:rational_PG}.
    \begin{enumerate}
        \item By \Cref{lem:rational_PG} (2), we have $c_{n+1}=c_n-e_n$ and hence
            \[
                e_n=\varepsilon_nc_n=\varepsilon_nc_1\prod_{k=1}^{n-1}\left(1-\dfrac{e_k}{c_k}\right)=c_1\cdot\varepsilon_n\prod_{k=1}^{n-1}(1-\varepsilon_k).
            \]
            Our assumption shows that $\liminf_{n\to \infty}e_n\geq 0$.
            Since $e_n$ is an integer, we conclude that $e_n\geq 0$ holds for $n\gg 0$.
            Therefore, this case is reduced to (2).

        \item By \Cref{lem:rational_PG} (2), we have $c_{n+1} = c_n-e_n$ and hence $(c_n)_{n=1}^\infty$ is eventually non-increasing.
            Since $c_n$ is a positive integer, $(c_n)_{n=1}^\infty$ is eventually constant and thus $e_n=0$ holds for $n\gg 0$.\qedhere
    \end{enumerate}
\end{proof}

\begin{corollary}\label{cor:Badea}
    Let $(a_n)_{n=1}^\infty$ be a sequence of positive integers satisfying
    $$
        \lim_{n\to \infty}\dfrac{a_n^2}{a_{n+1}}=1\quad\text{and}\quad
        \sum_{n=1}^\infty \dfrac{1}{a_n}\in \mathbb{Q}.
    $$
    Suppose that one of the following conditions is satisfied:
    \begin{enumerate}
        \item (Erd\H{o}s-Straus \cite{Erdos_Straus_63}) $\liminf_{n\to \infty}\dfrac{a_1a_2\cdots a_{n-1}}{a_n}\left(1-\dfrac{a_n^2}{a_{n+1}}\right)\geq 0$.
        \item (Badea \cite{Badea}) $a_{n+1}\geq a_n^2-a_n+1$ holds for $n\gg 0$.
    \end{enumerate}
    Then, $a_{n+1}=a_n^2-a_n+1$ holds for $n\gg 0$.
\end{corollary}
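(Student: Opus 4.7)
The plan is to reduce both cases to Proposition \ref{prop:Badea} via the pseudo-greedy framework. By Corollary \ref{cor:main}, after passing to a sufficiently late tail, I may assume $(a_n)_{n=1}^\infty$ is itself the pseudo-greedy expansion of some positive rational number $r=p/q$ with gap sequence $\varepsilon_n\to 0$; let $c_n, d_n, e_n$ be defined as in Lemma \ref{lem:rational_PG}. Once one of the hypotheses of Proposition \ref{prop:Badea} is verified, we obtain $\varepsilon_n=0$ for $n\gg 0$, and then the formula in Lemma \ref{lem:PG_sylvester} (with $\varepsilon_n=\varepsilon_{n+1}=0$) gives $a_{n+1}=a_n^2-a_n+1$ for $n\gg 0$, as desired.

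For case (1), the goal is to translate the hypothesis into that of Proposition \ref{prop:Badea} (1). A direct expansion via Lemma \ref{lem:PG_sylvester} yields $1-a_n^2/a_{n+1}=\varepsilon_n+O(1/a_n)$, while $a_n=d_n/c_n+O(1)$ gives $\frac{a_1\cdots a_{n-1}}{a_n}=\frac{c_n}{q}+O(c_n^2/d_n)$. Using the bound $c_n=O(1.5^n)$ from Lemma \ref{lem:rational_PG} (3), together with the doubly exponential growth of $a_n$ (an immediate consequence of $a_n^2/a_{n+1}\to 1$ and $a_n\to\infty$), every error term is $o(1)$, so
\[
\frac{a_1a_2\cdots a_{n-1}}{a_n}\left(1-\frac{a_n^2}{a_{n+1}}\right)=\frac{c_n\varepsilon_n}{q}+o(1)=\frac{e_n}{q}+o(1).
\]
Since $e_n=c_1\varepsilon_n\prod_{k=1}^{n-1}(1-\varepsilon_k)$, the assumption becomes $\liminf \varepsilon_n\prod(1-\varepsilon_k)\geq 0$, which is exactly the hypothesis of Proposition \ref{prop:Badea} (1).

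For case (2), the inequality $a_{n+1}\geq a_n^2-a_n+1$ combined with Lemma \ref{lem:PG_sylvester} rearranges to $\varepsilon_{n+1}\leq \frac{\varepsilon_n a_n^2}{1-\varepsilon_n}$. Suppose $\varepsilon_n<0$ at some large $n$. Combining this upper bound with the a priori constraint $\varepsilon_{n+1}\geq -1/2$ forces $|\varepsilon_n|\leq 3/(4a_n^2)$. On the other hand, $\varepsilon_n=e_n/c_n$ with $e_n$ a nonzero integer implies $|\varepsilon_n|\geq 1/c_n$, whence $c_n\geq 4a_n^2/3$. For $n$ large this contradicts $c_n=O(1.5^n)$ against the doubly exponential growth of $a_n$. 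Hence $\varepsilon_n\geq 0$ for $n\gg 0$, and Proposition \ref{prop:Badea} (2) applies.

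The technical heart is the asymptotic identification in case (1): one must verify that every correction term becomes $o(1)$ after multiplication by $c_n/q$. This crucially uses the subexponential bound $c_n=O(1.5^n)$ set against the doubly exponential product $a_1\cdots a_{n-1}$, exactly the same growth competition that drives the contradiction in case (2).
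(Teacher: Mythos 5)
Your proposal is correct, and its overall architecture is the same as the paper's: reduce via \Cref{cor:main} to the pseudo-greedy expansion of a rational number, verify the hypotheses of \Cref{prop:Badea}, and convert $\varepsilon_n=0$ back into $a_{n+1}=a_n^2-a_n+1$ through \Cref{lem:PG_sylvester}. The difference lies in how you verify those hypotheses. In case (1) the paper writes $\frac{a_1\cdots a_{n-1}}{a_n}=\frac{1}{a_1}\prod_{k=1}^{n-1}\frac{a_k^2}{a_{k+1}}$, factors each ratio as $(1-\varepsilon_k)(1+\beta_k)$ with $\beta_k=O(a_k^{-1})$, and uses convergence of $\prod_k(1+\beta_k)$ to identify the target quantity with $B_{n-1}\varepsilon_n\prod_{k=1}^{n-1}(1-\varepsilon_k)+o(1)$; you instead identify it with $e_n/q+o(1)$ via the integer data $c_n,d_n,e_n$ of \Cref{lem:rational_PG}, using $c_n=O(1.5^n)$ against the doubly exponential growth of $a_n$ and $d_n$ to absorb the errors. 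Since $e_n=c_1\varepsilon_n\prod_{k=1}^{n-1}(1-\varepsilon_k)$ with $c_1=p>0$, this is the same estimate in a different (and arguably more arithmetic) packaging; your checks of the error terms are sound. In case (2) your argument is genuinely different: the paper deduces from $\varepsilon_{n+1}\le \varepsilon_n a_n^2/(1-\varepsilon_n)$ that a negative $\varepsilon_n$ forces the gap sequence to decrease from then on, contradicting $\varepsilon_n\to 0$, whereas you observe that a negative $\varepsilon_n$ forces $|\varepsilon_n|\le 3/(4a_n^2)$ while integrality of $e_n$ gives $|\varepsilon_n|\ge 1/c_n$, impossible for large $n$ since $c_n=O(1.5^n)$ and $a_n$ is doubly exponential. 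Your version does not need $\varepsilon_n\to 0$ at this step (it exploits rationality quantitatively through $c_n$), while the paper's is softer and avoids \Cref{lem:rational_PG}(3); both are valid. One point glossed over equally by you and by the paper: after replacing $(a_n)$ by a tail $(a_{N+n})$, the quantity in hypothesis (1) changes only by the positive constant factor $(a_1\cdots a_N)^{-1}$, so the liminf condition is indeed preserved; it would cost one sentence to say so.
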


\begin{proof}
    By \Cref{cor:main}, we may assume that $(a_n)_{n=1}^\infty$ is the pseudo-greedy expansion of some positive rational number $r$, and that its gap sequence satisfies $\lim_{n\to \infty}\varepsilon_n=0$.
    By \Cref{lem:PG_sylvester}, it suffices to shows that $\varepsilon_n=0$ holds for $n\gg 0$.
    \begin{enumerate}
    \item By \Cref{lem:PG_sylvester}, we have
    \(a_{n+1}=(a_n^2/(1-\varepsilon_n))(1+O(a_n^{-1}))\),
    and hence we can write
    \begin{align}\label{eq:ratio_vs_epsilon}
    \dfrac{a_n^2}{a_{n+1}} = (1-\varepsilon_n)(1+\beta_n),
    \end{align}
    where $\beta_n=O(a_n^{-1})$.
    We can write $a_1a_2\cdots a_{n-1}/a_n$ as
    \begin{align}\label{eq:product}
    \dfrac{a_1a_2\cdots a_{n-1}}{a_n}=\dfrac{1}{a_1}\prod_{k=1}^{n-1}\dfrac{a_k^2}{a_{k+1}}=\dfrac{1}{a_1}\prod_{k=1}^{n-1}(1-\varepsilon_k) \prod_{k=1}^{n-1}(1+\beta_k).
    \end{align}
    The infinite product $\prod_{k=1}^\infty(1+\beta_k)$ converges since $\beta_k=O(a_k^{-1})$.
    In particular, the quantity
    \[
    B_n:=\dfrac{1}{a_1}\prod_{k=1}^n(1+\beta_k)>0
    \]
    is bounded.
    Combining \eqref{eq:ratio_vs_epsilon} with \eqref{eq:product}, we obtain
    \[
    \dfrac{a_1a_2\cdots a_{n-1}}{a_n}\left(1-\dfrac{a_n^2}{a_{n+1}}\right)=B_{n-1} \varepsilon_n\prod_{k=1}^{n-1}(1-\varepsilon_k) - B_{n-1}\beta_n\prod_{k=1}^{n}(1-\varepsilon_k).
    \]
    The second term of the right hand side is $o(1)$ because $\beta_n=O(a_n^{-1})$ and  $|\varepsilon_k|\leq (1/2)$.
    Thus, our assumption is equivalent to
    \(
    \liminf_{n\to \infty}\varepsilon_n\prod_{k=1}^{n-1}(1-\varepsilon_k)\geq 0
    \).
    By \Cref{prop:Badea} (1), we conclude that $\varepsilon_n=0$ for $n\gg 0$.
    \item By \Cref{lem:PG_sylvester}, our assumption can be reformulated as
    \[
    \dfrac{\varepsilon_n a_n^2}{1-\varepsilon_n}\geq \varepsilon_{n+1}\quad (n\gg 0).
    \]
    In particular, for sufficiently large $n$, we have
    \[
    \varepsilon_n<0\implies \varepsilon_n>\varepsilon_{n+1}.
    \]
    Suppose that we have $\varepsilon_n<0$ for infinitely many $n$.
    Then, the above implication shows that $\varepsilon_n$ is decreasing for $n\gg 0$.
    This contradicts the fact that $\varepsilon_n$ converges to $0$.
    Therefore, we have $\varepsilon_n\geq 0$ for $n\gg 0$.
    By \Cref{prop:Badea} (2), we conclude that $\varepsilon_n=0$ for $n\gg 0$.\qedhere
    \end{enumerate}
\end{proof}

\begin{remark}
    The aforementioned result of Erd\H{o}s-Straus solves \Cref{ques:erdos} under the condition
    \[
    \dfrac{a_n^2}{a_{n+1}}=1+o(n^{-1}).
    \]
    Indeed, if we write $(a_n^2/a_{n+1})=1-\gamma_n$ with $\gamma_n=o(n^{-1})$, then we have
    \[
    \dfrac{a_1a_2\cdots a_{n-1}}{a_n}\left(1-\dfrac{a_n^2}{a_{n+1}}\right)=
    \dfrac{1}{a_1}\left(1-\dfrac{a_n^2}{a_{n+1}}\right)\prod_{k=1}^{n-1}\dfrac{a_k^2}{a_{k+1}} = \dfrac{\gamma_n}{a_1}\prod_{k=1}^{n-1}(1-\gamma_k) =o(1),
    \]
    so we can apply \Cref{cor:Badea} (1).
\end{remark}

\begin{remark}\label{rem:irr}
    Suppose that \Cref{ques:erdos} has an affirmative answer, or equivalently, that \Cref{conj} is true.
    Recall the set $\mathcal{I}$ from \Cref{type2_countable}, and let $\alpha\in (1,\infty)\setminus \mathcal{I}$.
    By definition, there is a sequence of positive integers $(a_n)_{n=1}^\infty$ satisfying $a_n\approx \alpha^{2^n}$ whose reciprocal sum is rational.
    By our assumption, $(a_n)_{n=1}^\infty$ eventually follows the recurrence relation
    \[
    a_{n+1}=a_n^2-a_n+1.
    \]
    In other words, there is some integer $N\geq 0$ and a positive integer $m$ such that \(a_{N+n} = s_n(m)\), where $s_n(m)$ is the sequence defined in \Cref{example:sylvester}.
    In particular, $\alpha$ is given by
    \[
    \alpha = \lim_{n\to \infty} a_{N+n}^{2^{-(N+n)}} = \lim_{n\to \infty} s_n(m)^{2^{-(N+n)}}=c(m)^{2^{-N}},
    \]
    where $c(m)$ is the constant defined in \Cref{example:sylvester}.
    Therefore, assuming that \Cref{ques:erdos} has an affirmative answer, we can conclude that
    \[
        (1,\infty)\setminus\mathcal{I}=\{c(m)^{2^{-N}}\mid N\geq 0,\ m>0\}.
    \]
    Since $c(m)$ is transcendental by the result of Dubickas \cite{Dubickas}, this implies that $\overline{\mathbb{Q}}\cap(1,\infty)\subset \mathcal{I}$.
    In particular, an affirmative answer to \Cref{ques:erdos} will imply that $2^{2^n}$ is a Type 2 irrationality sequence.
\end{remark}

\printbibliography

\end{document}